\theoremstyle{plain}
\newtheorem{theorem}{Theorem}[section]
\newtheorem{corollary}[theorem]{Corollary}
\newtheorem{proposition}[theorem]{Proposition}
\theoremstyle{definition}
\newtheorem{remark}[theorem]{Remark}
\numberwithin{equation}{section}
\newcommand{\N}{\mathbb N}
\newcommand{\R}{\mathbb R}
\newcommand{\rn}{\R^n}
\DeclareMathOperator{\spn}{span}
\DeclareMathOperator{\spt}{spt}
\DeclareMathOperator{\cart}{\times}
\DeclareMathOperator{\rank}{rank}
\newcommand{\M}{\mathfrak{M}}
\newcommand{\Mpl}{\M^+}
\renewcommand{\d}{{\fam0 d}}
\def\paragraph{\bigskip\@startsection{paragraph}{4}%
  \z@\z@{-\fontdimen2\font}%
  {\normalfont\bfseries}}
\begin{document}

\title{Measure of noncompactness of Sobolev embeddings on strip-like domains}
\author{David E. Edmunds, Jan Lang, Zden\v ek Mihula}

\address{David E. Edmunds, Department of Mathematics, University of Sussex, Falmer, Brighton, BN1~9QH, UK}
\email{davideedmunds@aol.com}
\urladdr{\href{https://orcid.org/0000-0002-8528-3067}{0000-0002-8528-3067}}

\address{Jan Lang, Department of Mathematics, Ohio State University,
Columbus OH, 43210-1174 USA}
\email{lang@math.osu.edu}
\urladdr{\href{https://orcid.org/0000-0003-1582-7273}{0000-0003-1582-7273}}

\address{Zden\v ek Mihula, Charles University, Faculty of Mathematics and
Physics, Department of Mathematical Analysis, Sokolovsk\'a~83,
186~75 Praha~8, Czech Republic}
\email{mihulaz@karlin.mff.cuni.cz}
\urladdr{\href{https://orcid.org/0000-0001-6962-7635}{0000-0001-6962-7635}}

\subjclass[2000]{41A46, 46E35, 46E30, 46B50}
\keywords{entropy numbers, measure of noncompactness, maximal noncompactness, Sobolev embedding, $s$-numbers}

\thanks{This research was partly supported by the Charles University, project GA UK No.~1056119, and by the grant P201-18-00580S of the Grant Agency of the Czech Republic.}

\begin{abstract}
We compute the precise value of the measure of noncompactness of Sobolev embeddings $W_0^{1,p}(D)\hookrightarrow L^p(D)$, $p\in(1,\infty)$, on strip-like domains $D$ 
of the form $\R^k\times\prod\limits_{i=1}^{n-k}(a_i,b_i)$. We show that such embeddings are always maximally noncompact, that is, their measure of noncompactness coincides with their norms. Furthermore, we show that not only the measure of noncompactness but also all strict $s$-numbers of the embeddings in question coincide with their norms. We also prove that the maximal noncompactness of Sobolev embeddings on strip-like domains remains valid even when Sobolev-type spaces built upon general rearrangement-invariant spaces are considered. As a by-product we obtain the explicit form for the first eigenfunction  of the pseudo-$p$-Laplacian on an $n$-dimensional rectangle. 
\end{abstract}

\date{\today}

\maketitle

\setcitestyle{numbers}
\bibliographystyle{plainnat}


\section{Introduction}
Let $T$ be a bounded linear map between Banach spaces $X$ and $Y.$ For each 
$ k\in \mathbb{N}$ the $k^{th}$ entropy number $e_{k}(T)$ of $T$ is defined by 

\[
e_{k}(T)=\inf \left\{ \varepsilon >0:T\left( B_{X}\right) \mbox{ can be
covered by }2^{k-1}\mbox{ balls in }Y \mbox{ with radius }\varepsilon
\right\} ,
\]
where $B_{X}$ is the closed unit ball in $X.$ Since $T$ is compact if and
only if $\lim\limits_{k\rightarrow \infty }e_{k}(T)=0,$ this limit is called the
measure of noncompactness of $T;$ we denote it by $\beta (T).$ Plainly $%
0\leq \beta (T)\leq \left\Vert T\right\Vert ;$ if $\beta (T)=\left\Vert
T\right\Vert $ we say that $T$ is \textit{maximally noncompact. }The
definition of entropy numbers has its roots in the notion of the metric
entropy of a set, introduced by Kolmogorov in the 1930s and which, in its
different variants, has proved useful in numerous branches of mathematics
and theoretical computer science. Sharp upper and lower estimates of $%
e_{k}(T)$ are known in many cases when $T$ is compact:\ information of this
type is useful in connection with the estimation of eigenvalues and is
especially complete when $T$ is an embedding of one function space into
another (see, for example, \cite{ET}).

It is a different story when $T$ is not compact. If $T$ is an embedding map
between function spaces on an open subset $\Omega $ of $\mathbb{R}^{n}$,
possible reasons for noncompactness include

\noindent { (i)} $\Omega $ is unbounded;

\noindent { (ii)} if $\Omega$ is bounded, because of some bad behaviour of the
boundary $\partial \Omega$, or due to particular values of the parameters
involved in the function spaces.

An example of (ii) was provided by Hencl \cite{Hen}, who considered the case
in which $k\in \mathbb{N},p\in \lbrack 1,\infty ),kp<n,1/q=1/p-k/n$ and, in
standard notation, $id\colon W_{0}^{k,p}\left( \Omega \right) \rightarrow
L^{q}\left( \Omega \right) $ is the natural embedding. He showed that $id$
is maximally compact, so that $e_{k}(id)=\left\Vert id\right\Vert $ for all $%
k\in \mathbb{N}.$ Further work in this direction, involving Sobolev spaces
based on Lorentz spaces and maximally noncompact embeddings, is contained in 
\cite{Bou} and \cite{LMOP}.

Little seems to be known in cases of type (i), even in quite basic
situations. For example, suppose that $n=2,$ $\Omega =\mathbb{R}\times
(0,\pi )$ and $I\colon W_{0}^{1,2}\left( \Omega \right) \rightarrow L^{2}\left(
\Omega \right) $ is the natural embedding. Then it is known that $I$ is not
compact, so that $\beta \left( I\right) >0,$ but although this example could
hardly be simpler, the exact value of $\beta \left( I\right) $ appears to be
unknown up to this point. It was shown in \citep[Remark~3.8]{Hen} that the embedding $I$ is maximally noncompact, but the precise value of its measure of noncompactness remained unknown. Here we settle this question by establishing a
more general result in which $n\geq 2,k\in \left\{ 1,...,n-1\right\} ,p\in
(1,\infty ),$ $-\infty <a_{i}<b_{i}<\infty $ for each $i\in \left\{
1,...,n-k\right\} ,$ and 
\[\label{intro:defD}
D=\R^{k}\times \prod_{i=1}^{n-k}\left( a_{i},b_{i}\right) ;
\]%
the norm on $W_{0}^{1,p}\left( D\right) $ is defined by 
\[
\left( \left\Vert u\right\Vert _{L^{p}(D)}^{p}+\left\Vert \left\vert \nabla
u\right\vert _{l^{p}}\right\Vert _{L^{p}(D)}^{p}\right) ^{1/p}.
\]%
We show that the natural embedding $I_{p}:W_{0}^{1,p}\left( D\right)
\rightarrow L^{p}\left( D\right) $ is maximally noncompact and 
\[
\beta \left( I_{p}\right) =\left\Vert I_{p}\right\Vert =\left( 1+\left(
p-1\right) \left( \frac{2\pi }{p\sin (\pi /p)}\right)
^{p}\sum_{i=1}^{n-k}\left( b_{i}-a_{i}\right) ^{-p}\right)^{-1/p}.
\]%
For the particularly elementary illustration involving $I$ that was
mentioned immediately above this gives the attractive formula%
\[
\beta \left( I\right) =\left\Vert I\right\Vert =1/\sqrt{2}.
\]%
It is also shown that the strict $s$-numbers of $I_{p}$ (that is, the
approximation, isomorphism, Gelfand, Bernstein, Kolmogorov and Mityagin
numbers) exhibit the same behaviour as the entropy numbers: the $k^{th}$
such strict $s-$number coincides with $\left\Vert I_{p}\right\Vert $ for all 
$k\in \mathbb{N}.$ The proof of these assertions relies on properties of the
pseudo-$p$-Laplacian and the $p$-trigonometric functions. Furthermore, we show that the embedding of the Sobolev-type space $W_0^1X(D)$ built upon a general rearrangement-invariant space $X(D)$ to a rearrangement-invariant space $Y(D)$ is always maximally noncompact provided that the space $Y(D)$ has absolutely continuous norm. Precise definitions are contained in the following section. 

\section{Background material}
In this section, we fix the notation used throughout this paper and collect the fundamental theoretical background needed later in \hyperref[sec:main]{Section~\ref*{sec:main}}. Let $\Omega\subseteq\rn$ be an open set (throughout this paper, we assume that $n\geq2$).

We denote the set of all continuous functions that are compactly supported in $\Omega$ by $\mathcal C_c(\Omega)$. The set of all smooth (i.e., infinitely differentiable) functions that are compactly supported in $\Omega$ is denoted by $\mathcal C_0^\infty(\Omega)$.

For $p\in[1,\infty)$, $W^{1,p}(\Omega)$ stands for \emph{the classical first-order Sobolev space} on $\Omega$ endowed with the norm
\begin{equation}\label{prel:W1pnorm}
\|u\|_{W^{1,p}(\Omega)}=\left(\|u\|^p_{L^p(\Omega)}+\||\nabla u|_{\ell^p}\|_{L^p(\Omega)}^p\right)^\frac1{p},\ u\in W^{1,p}(\Omega),
\end{equation}
where $|\nabla u|_{\ell^p}$ is the $\ell^p$-norm of the (weak) gradient of $u$, that is,
\begin{equation*}
|\nabla u|_{\ell^p}=\left(\sum_{i=1}^n\left|\frac{\partial u}{\partial x_i}\right|^p\right)^\frac1{p}.
\end{equation*}
We denote the closure of $\mathcal C_0^\infty(\Omega)$ in $W^{1,p}(\Omega)$ by $W_0^{1,p}(\Omega)$.

We shall also work with Sobolev-type spaces built upon function spaces more general than the Lebesgue spaces. We say that a functional $\varrho\colon\Mpl(\Omega)\to[0,\infty]$, where $\Mpl(\Omega)$ is the set of all nonnegative measurable functions on $\Omega$, is a rearrangement-invariant Banach function norm if, for all $f,g,F_j\in\Mpl(\Omega)$, $k\in\N$, for all $\alpha\in[0,\infty)$, and for all measurable $E\subseteq\Omega$, the following properties hold:
\begin{itemize}
\item $\varrho(f)=0$ if and only if $f=0$ a.e., $\varrho(\alpha f)=\alpha\varrho(f)$, $\varrho(f+g)\leq\varrho(f)+\varrho(g)$;
\item if $0\leq g\leq f$ a.e., then $\varrho(g)\leq\varrho(f)$;
\item if $0\leq F_j\nearrow f$ a.e., then $\varrho(F_j)\nearrow\varrho(f)$;
\item if $|E|<\infty$, then $\varrho(\chi_E)<\infty$;
\item if $|E|<\infty$, then there is a constant $C_E$, which may depend only on $E$ and $\varrho$, such that $\int_Ef(x)\,\d x\leq C_E\varrho(f)$;
\item $\varrho(f)=\varrho(g)$ whenever $f$ and $g$ are equimeasurable, that is, $|\{x\in\Omega\colon f(x)>\lambda\}|=|\{x\in\Omega\colon g(x)>\lambda\}|$ for every $\lambda>0$.
\end{itemize}
If $\varrho$ is a rearrangement-invariant Banach function norm, then the set $X(\Omega)=\{f\in\M(\Omega)\colon\varrho(|f|)<\infty\}$, where $\M(\Omega)$ is the set of all measurable functions on $\Omega$, endowed with the norm $\|\cdot\|_{X(\Omega)}$ defined as
\begin{equation*}
\|f\|_{X(\Omega)}=\varrho(|f|),\ f\in X(\Omega),
\end{equation*}
is called a rearrangement-invariant Banach function space. A rearrangement-invariant Banach function space (we shall write just `a \emph{rearrangement-invariant space}') is a Banach space. Textbook examples of rearrangement-invariant spaces are the Lebesgue spaces $L^p$ ($p\in[1,\infty]$), the (two-parametric) Lorentz spaces or the Orlicz spaces. We say that a function $f\in X(\Omega)$ \emph{has absolutely continuous norm} in $X(\Omega)$ if $\lim\limits_{k\to\infty}\|f\chi_{E_k}\|_{X(\Omega)}=0$ for every sequence $\{E_k\}_{k=1}^\infty$ of measurable sets $E_k\subseteq\Omega$ such that $\lim\limits_{k\to\infty}\chi_{E_k}(x)=0$ for a.e.~$x\in\Omega$. We say that $X(\Omega)$ has absolutely continuous norm if every $f\in X(\Omega)$ has absolutely continuous norm in $X(\Omega)$. For example, the Lebesgue space $L^p(\Omega)$ has absolutely continuous norm if and only if $p<\infty$.

Comprehensive accounts of the theory of rearrangement-invariant spaces can be found, e.g., in \citep{BS:88} or \citep{PKJF:13}.

We denote \emph{the (first-order) Sobolev-type space built upon a rearrangement-invariant space} $X(\Omega)$, that is, the set of all weakly differentiable functions from $X(\Omega)$ whose gradients also belong to $X(\Omega)$, by $W^1X(\Omega)$. We equip $W^1X(\Omega)$ with the norm
\begin{equation*}
\|u\|_{W^1X(\Omega)}=\|u\|_{X(\Omega)}+\||\nabla u|_{\ell^1}\|_{X(\Omega)},\ u\in W^1X(\Omega),
\end{equation*}
which turns $W^1X(\Omega)$ into a Banach space. Note that we have $W^{1,p}(\Omega)=W^1L^p(\Omega)$ in the set-theoretical sense, but their norms are merely equivalent (unless $p=1$). The closure of $\mathcal C_0^\infty(\Omega)$ in $W^1X(\Omega)$ is denoted by $W_0^1X(\Omega)$.

Any rule $s\colon T\to\left\{s_m(T)\right\}_{m=1}^\infty$ that assigns to each bounded linear operator $T$ from a Banach space $X$ to a Banach space $Y$ (we shall write $T\in B(X,Y)$) a sequence $\left\{s_m(T)\right\}_{m=1}^\infty$ of nonnegative numbers having, for every $m\in\N$, the following properties:
\begin{itemize}
\item[(S1)] $\|T\|=s_1(T)\geq s_2(T)\geq\cdots\geq0$;
\item[(S2)] $s_m(S+T)\leq s_m(S)+\|T\|$ for every $S\in B(X,Y)$;
\item[(S3)] $s_m(BTA)\leq\|B\|s_m(T)\|A\|$ for every $A\in B(W,X)$ and $B\in B(Y,Z)$, where $W,Z$ are Banach spaces;
\item[(S4)] $s_m(id\colon E\to E)=1$ for every Banach space $E$ with $\dim E\geq m$;
\item[(S5)] $s_m(T)=0$ if $\rank T< m$;
\end{itemize}
is called a \emph{strict $s$-number}. Notable examples of strict $s$-numbers are \emph{the approximation numbers} $a_m$, \emph{the isomorphism numbers} $i_m$, \emph{the Gelfand numbers} $c_m$, \emph{the Bernstein numbers} $d_m$, \emph{the Kolmogorov numbers} $d_m$ or \emph{the Mityagin numbers} $m_n$. For their definitions and the difference between strict $s$-numbers and `standard' $s$-numbers, we refer the reader to \citep[Chapter~5]{EL:11}. In this paper, we will only need the definition of the isomorphism numbers. The $m$-th isomorphism number $i_m(T)$ of $T\in B(X,Y)$ is defined as
\begin{equation*}
i_m(T)=\sup\{\|A\|^{-1}\|B\|^{-1}\}
\end{equation*}
where the supremum is taken over all Banach spaces $G$ with $\dim(G)\geq m$ and all bounded linear operators $A\colon Y\to G$, $B\colon G\to X$ such that $ATB$ is the identity on $G$. The isomorphism numbers are the smallest strict $s$-numbers (\citep[Theorem~3.4]{P:74}), that is,
\begin{equation}\label{prel:isonumssmallest}
s_m(T)\geq i_m(T)
\end{equation}
for every strict $s$-number $s$, for every $T\in B(X,Y)$, and for every $m\in\N$.

If $T\in B(X,Y)$, then its \emph{$m$-th entropy number $e_m(T)$}, $m\in\N$, is defined as
\begin{equation*}
e_m(T)=\inf\left\{\varepsilon>0\colon\ \text{there are $y_1,\dots,y_{2^{m-1}}\in Y$ such that $T(B_X)\subseteq\bigcup_{j=1}^{2^{m-1}}\left(y_j+\varepsilon B_Y\right)$}\right\},
\end{equation*}
where $B_X$ and $B_Y$ are the closed unit balls of $X$ and $Y$, respectively. Note that entropy numbers are not (strict) $s$-numbers (e.g., property (S4) is violated (\citep[Chapter~2, Proposition~1.3]{EE:18})) even though they posses similar properties. Through entropy numbers, we define \emph{the measure of noncompactness $\beta(T)$} of $T\in B(X,Y)$ as
\begin{equation*}
\beta(T)=\lim_{m\to\infty}e_m(T).
\end{equation*}
Note that the limit always exists because the sequence $\{e_m(T)\}_{m=1}^\infty$ is nonincreasing. Furthemore, we have $0\leq\beta(T)\leq\|T\|$, and the operator $T$ is compact if and only if $\beta(T)=0$. We say that the operator $T$ is \emph{maximally noncompact} if $\beta(T)=\|I\|$. Thanks to the monotonicity of $\{e_m(T)\}_{m=1}^\infty$, the operator $T$ is maximally noncompact if and only if $e_m(T)=\|T\|$ for every $m\in\N$.

A great deal of information on (strict) $s$-numbers and entropy numbers can be found in the pioneering work of Pietsch \citep{P:74, P:78}  as well as, e.g., in books \citep{CS:90, EE:18} and references therein.

Lastly, let us briefly recall \emph{the generalized $p$-trigonometric functions}. For $p\in(1,\infty)$, $\sin_p$ is defined on $[0,\frac{\pi_p}{2}]$ as the inverse function to the increasing function
\begin{equation*}
[0,1]\ni t\mapsto\int_0^t\left(1-s^p\right)^{-\frac1{p}}\,\d s,
\end{equation*}
where
\begin{equation*}
\pi_p=2\int_0^1\left(1-s^p\right)^{-\frac1{p}}\,\d s.
\end{equation*}
We extend $\sin_p$ to $[-\pi_p,\pi_p]$ by defining $\sin_p(t)=\sin_p(\pi_p-t)$, $t\in\left[\frac{\pi_p}{2},\pi_p\right]$, and $\sin_p(t)=-\sin_p(-t)$, $t\in[-\pi_p, 0]$.
Finally, we extend $\sin_p$ to the whole real line in such a way that the resulting function is $2\pi_p$-periodic. The function $\sin_p$ is continuously differentiable on $\R$, and its derivative is denoted by $\cos_p$. Note that $\sin_2=\sin$, $\cos_2=\cos$ and $\pi_2=\pi$. The interested reader can find more information on properties of the generalized $p$-trigonometric functions as well as their connection with the theory of the $p$-Laplacian, e.g., in \citep{EL:11, L:95, O:84}.

\section{Noncompactness}\label{sec:main}
Although the following proposition concerning the density of smooth compactly supported functions in rearrangement-invariant spaces having absolutely continuous norm on open sets $\Omega\subseteq\rn$ is folklore, the only reference that we could find is \citep[Lemma~2.10]{KS:14}, which deals with the particular case $\Omega=\R$. For the reader's convenience, we sketch a proof of the assertion in full generality.
\begin{proposition}\label{prop:density}
Let $\Omega\subseteq\rn$ be a (nonempty) open set and let $X(\Omega)$ be a rearrangement-invariant space on $\Omega$. If $X(\Omega)$ has absolutely continuous norm, then smooth compactly supported functions on $\Omega$ are dense in $X(\Omega)$.
\end{proposition}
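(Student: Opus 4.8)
The plan is to approximate an arbitrary $f\in X(\Omega)$ in the norm of $X(\Omega)$ by elements of $\mathcal C_0^\infty(\Omega)$ through three successive reductions: first to bounded functions supported in a compact subset of $\Omega$, then to functions in $\mathcal C_c(\Omega)$, and finally, by mollification, to $\mathcal C_0^\infty(\Omega)$. Absolute continuity of the norm would enter only in two ways. Directly, as in its definition: if $E_k\subseteq\Omega$ and $\chi_{E_k}\to0$ a.e., then $\|h\chi_{E_k}\|_{X(\Omega)}\to0$ for every $h\in X(\Omega)$. And in the derived form: if $F\subseteq\Omega$ has finite measure and $A_k\subseteq F$ satisfy $|A_k|\to0$, then $\|\chi_{A_k}\|_{X(\Omega)}\to0$. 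I would obtain the derived form by a standard subsequence argument: every subsequence of $\{A_k\}$ has a further subsequence $\{A_{k_l}\}$ with $\sum_l|A_{k_l}|<\infty$, so $\chi_{A_{k_l}}\to0$ a.e.\ by the Borel--Cantelli lemma, and then the definition applied to $h=\chi_F\in X(\Omega)$ forces $\|\chi_{A_{k_l}}\|_{X(\Omega)}\to0$.

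For the first reduction I would fix bounded open sets $\Omega_j$ with $\overline{\Omega_j}$ compact, $\overline{\Omega_j}\subseteq\Omega_{j+1}$, and $\bigcup_j\Omega_j=\Omega$, and consider the truncations $T_kf=\max\{-k,\min\{k,f\}\}$. Using only the lattice property of the norm one has
\[
\|f-T_k(f\chi_{\Omega_j})\|_{X(\Omega)}\le\|f\chi_{\Omega\setminus\Omega_j}\|_{X(\Omega)}+\bigl\||f|\chi_{\{|f|>k\}}\bigr\|_{X(\Omega)};
\]
since $\chi_{\Omega\setminus\Omega_j}\to0$ a.e.\ as $j\to\infty$ and $\chi_{\{|f|>k\}}\to0$ a.e.\ as $k\to\infty$ (the latter because $|f|<\infty$ a.e.), both terms on the right can be made arbitrarily small, and $T_k(f\chi_{\Omega_j})$ is bounded and supported in $\overline{\Omega_j}$. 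Hence bounded functions supported in some $\overline{\Omega_j}$ are dense in $X(\Omega)$. For the second reduction, let $g$ be such a function, say $|g|\le M$ with $\spt g\subseteq\overline{\Omega_j}$. Lusin's theorem applied on the finite-measure set $\Omega_{j+1}$, combined with the Tietze extension theorem and multiplication by a cutoff equal to $1$ on $\overline{\Omega_j}$ and supported in $\Omega_{j+1}$, produces for each $\varepsilon>0$ a function $\varphi\in\mathcal C_c(\Omega)$ with $\|\varphi\|_{L^\infty(\Omega)}\le M$, $\spt\varphi\subseteq\overline{\Omega_{j+1}}$, and $|\{\varphi\ne g\}|<\varepsilon$; consequently $\|g-\varphi\|_{X(\Omega)}\le 2M\,\|\chi_{\{\varphi\ne g\}}\|_{X(\Omega)}$, which tends to $0$ as $\varepsilon\to0$ by the derived form of absolute continuity with $F=\Omega_{j+1}$. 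Thus $\mathcal C_c(\Omega)$ is dense in the class supplied by the first reduction, hence in $X(\Omega)$.

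Finally, given $\varphi\in\mathcal C_c(\Omega)$ with $\spt\varphi\subseteq K$ compact and $d:=\operatorname{dist}(K,\partial\Omega)>0$, the mollifications $\varphi*\rho_\delta$ (with $\{\rho_\delta\}$ a standard family of mollifiers) lie in $\mathcal C_0^\infty(\Omega)$ for all sufficiently small $\delta>0$, are supported in a common compact subset of $\Omega$ of finite measure, and converge to $\varphi$ uniformly; since $\|\varphi-\varphi*\rho_\delta\|_{X(\Omega)}$ is controlled by $\|\varphi-\varphi*\rho_\delta\|_{L^\infty(\Omega)}$ times the finite $X(\Omega)$-norm of the indicator of that compact set, this error tends to $0$ with $\delta$. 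Chaining the three reductions yields the proposition. The one point I expect to require an explicit argument rather than bookkeeping is the derived form of absolute continuity: the hypothesis is phrased through almost-everywhere convergence of indicators, whereas here it must be used to control the $X(\Omega)$-norms of sets whose \emph{measures} shrink, and the Borel--Cantelli subsequence passage above is precisely what bridges that gap.
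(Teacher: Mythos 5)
Your proof is correct and follows essentially the same route as the paper's: reduce to bounded functions supported in a bounded set, apply Lusin's theorem to obtain a $\mathcal C_c(\Omega)$ approximant, and mollify, with absolute continuity of the norm controlling each error term. The only notable difference is that you supply details the paper delegates to a citation or glosses over --- in particular the Borel--Cantelli subsequence argument converting smallness of $|A_k|$ into smallness of $\|\chi_{A_k}\|_{X(\Omega)}$, which is exactly the point the paper skips when it passes from $|\Omega_k\setminus F_j|<1/j$ to its norm estimate.
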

\begin{proof}
Let $u\in X(\Omega)\setminus\{0\}$. Since $X(\Omega)$ has absolutely continuous norm, bounded functions supported in sets of finite measure are dense in $X(\Omega)$ (e.g.~\citep[Chapter~1, Proposition~3.10 and Theorem~3.13]{BS:88}). Therefore, we may assume, without loss of generality, that $u$ is bounded on $\Omega$. Let $\varepsilon>0$ be given. Set $\Omega_k=\Omega\cap\{x\in\rn\colon |x|<k\}$ for $k\in\N$. Clearly, $\chi_{\Omega\setminus\Omega_k}\to 0$ as $k\to\infty$. Hence, since $u$ has absolutely continuous norm in $X(\Omega)$, there is $k\in\N$ such that
\begin{equation}\label{prop:density:unboundedpart}
\|u\chi_{\Omega\setminus\Omega_k}\|_{X(\Omega)}<\frac{\varepsilon}{3}.
\end{equation}
Moreover, we may assume that $\Omega_k\neq\emptyset$. 

Thanks to Lusin's theorem (recall that $\Omega_k$ is bounded and $u$ is a bounded measurable function),  for each $j\in\N$, there is a compact set $F_j\subseteq\Omega_k$ and a continuous compactly supported function $f_j\in\mathcal C_c(\Omega_k)$ such that
\begin{align}
u&=f_j\quad\text{on $F_j$}\label{prop:density:continuouspart},\\
\sup_{x\in\Omega_k}|f_j(x)|&\leq\sup_{x\in\Omega_k}|u(x)|\label{prop:density:fkboundedbyu},\\
|\Omega_k\setminus F_j|&<\frac1{j}.\label{prop:density:omeganminusfk}
\end{align}
Using the fact that $X(\Omega)$ has absolutely continuous norm again, we can find $j\in\N$ large enough such that
\begin{equation}\label{prop:density:noncontinuouspart}
\|\chi_{\Omega_k\setminus F_j}\|_{X(\Omega)}<\frac{\varepsilon}{6\|u\|_{L^\infty(\Omega)}}
\end{equation}
owing to \eqref{prop:density:omeganminusfk}.
 Furthermore, since $f_j$ is continuous and compactly supported in the open set $\Omega_k$, we can employ a standard mollification argument to find a smooth compactly supported function $g\in\mathcal C^\infty_0(\Omega_k)$ such that
\begin{equation}\label{prop:density:smoothuniformapprox}
\sup_{x\in\Omega_k}|f_j(x)-u(x)|<\frac{\varepsilon}{3\|\chi_{\Omega_k}\|_{X(\Omega)}}
\end{equation}
(note that $0<\|\chi_{\Omega_k}\|_{X(\Omega)}<\infty$ because $\Omega_k$ has finite positive measure).

Finally, combining \eqref{prop:density:unboundedpart}, \eqref{prop:density:continuouspart}, \eqref{prop:density:fkboundedbyu}, \eqref{prop:density:noncontinuouspart} and \eqref{prop:density:smoothuniformapprox}, we arrive at
\begin{align*}
\|u-g\|_{X(\Omega)}&\leq\|u\chi_{\Omega\setminus\Omega_k}\|_{X(\Omega)}+\|u\chi_{\Omega_k}-f_j\|_{X(\Omega)} + \|f_j-g\|_{X(\Omega)}\\
&< \frac{\varepsilon}{3} + \frac{2\|u\|_{L^\infty(\Omega_k)}}{6\|u\|_{L^\infty(\Omega)}}\varepsilon + \frac{\varepsilon}{3} \leq\varepsilon.
\end{align*}
\end{proof}

\begin{remark}
Since the rearrangement invariance of $X(\Omega)$ was not used at all, \hyperref[prop:density]{Proposition~\ref*{prop:density}} is actually valid even when $X(\Omega)$ is just a Banach function space, a function space defined through a functional $\varrho\colon\Mpl(\Omega)\to[0,\infty]$ that has all properties of a rearrangement-invariant Banach function norm but the last one.
\end{remark}

The following theorem shows that the Sobolev embedding $W_0^1X(D)\hookrightarrow Y(D)$ on a strip-like domain $D$ is always maximally noncompact whatever the rearrangement-invariant spaces $X(D)$ and $Y(D)$ are provided that the target space $Y(D)$ has absolutely continuous norm.
\begin{theorem}\label{thm:maxnoncompact}
Let $k\in\{1,\dots,n-1\}$ and $-\infty<a_i<b_i<\infty$, $i=1,\dots,n-k$. Set $D=\R^k\times\prod\limits_{i=1}^{n-k}(a_i,b_i)\subseteq\rn$. Let $X(D)$, $Y(D)$ be rearrangement-invariant spaces on $D$. Assume that $W_0^1X(D)\hookrightarrow Y(D)$. If $Y(D)$ has absolutely continuous norm, then
\begin{equation*}
e_m(W_0^1X(D)\hookrightarrow Y(D))=\|W_0^1X(D)\hookrightarrow Y(D)\|\quad\text{for every $m\in\N$},
\end{equation*}
that is, the canonical embedding $W_0^1X(D)\hookrightarrow Y(D)$ is maximally noncompact.
\end{theorem}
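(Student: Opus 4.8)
Denote by $I$ the embedding $W_0^1X(D)\hookrightarrow Y(D)$. The plan is to prove that $e_m(I)\ge\|I\|$ for every $m\in\N$; since $e_m(I)\le\|I\|$ holds trivially (cover $I(B_{W_0^1X(D)})$ by the single ball $\|I\|B_{Y(D)}$), this yields the asserted equality. If $\|I\|=0$ there is nothing to prove, so assume $\|I\|>0$. Fix $m\in\N$ and suppose, for contradiction, that $I(B_{W_0^1X(D)})$ is covered by $2^{m-1}$ balls $y_1+\varepsilon B_{Y(D)},\dots,y_{2^{m-1}}+\varepsilon B_{Y(D)}$ with $\varepsilon<\|I\|$, and put $\delta=\|I\|-\varepsilon>0$. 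It is enough to produce a single $u\in W_0^1X(D)$ with $\|u\|_{W_0^1X(D)}\le1$ and $\|u-y_j\|_{Y(D)}>\varepsilon$ for every $j\in\{1,\dots,2^{m-1}\}$, because then $Iu$ lies in none of the covering balls.

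To construct $u$, I first confine the finitely many points $y_j$ to a fixed ball, and then push a near-extremal test function far out along the unbounded factor $\R^k$. Write $B_R=\{x\in\rn\colon|x|<R\}$. Since $Y(D)$ has absolutely continuous norm and $\chi_{D\setminus B_R}\to0$ pointwise on $D$ as $R\to\infty$, and since there are only finitely many $j$, we may fix $R>0$ with $\|y_j\chi_{D\setminus B_R}\|_{Y(D)}<\delta/2$ for all $j$. Next, as $\mathcal C_0^\infty(D)$ is by definition dense in $W_0^1X(D)$ and $I$ is continuous, we may fix $u_0\in\mathcal C_0^\infty(D)$ with $\|u_0\|_{W_0^1X(D)}\le1$ and $\|u_0\|_{Y(D)}>\|I\|-\delta/2$; pick $\rho>0$ with $\spt u_0\subseteq B_\rho$. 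Finally, let $e_1$ be a unit vector in the $\R^k$-factor of $\rn$, choose $t>R+\rho$, and set $u=u_0(\cdot-te_1)$. Then $u\in\mathcal C_0^\infty(D)$ (translation by $te_1$ maps $D$ onto itself) and $\spt u=\spt u_0+te_1\subseteq\rn\setminus B_R$; moreover, since $x\mapsto x-te_1$ is a measure-preserving bijection of $D$ onto itself, $|u|$ is equimeasurable with $|u_0|$ and $|\nabla u|_{\ell^1}$ with $|\nabla u_0|_{\ell^1}$, so the rearrangement invariance of $X(D)$ and $Y(D)$ gives $\|u\|_{W_0^1X(D)}=\|u_0\|_{W_0^1X(D)}\le1$ and $\|u\|_{Y(D)}=\|u_0\|_{Y(D)}>\|I\|-\delta/2$.

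Now fix $j$. Since $\spt u\cap B_R=\emptyset$ while $y_j\chi_{B_R}$ vanishes outside $B_R$, we have $u-y_j\chi_{B_R}=u$ on $\spt u$, and hence $|u-y_j\chi_{B_R}|\ge|u|$ a.e.\ on $D$; the lattice property of the norm of $Y(D)$ then gives $\|u-y_j\chi_{B_R}\|_{Y(D)}\ge\|u\|_{Y(D)}>\|I\|-\delta/2$. Therefore, by the triangle inequality,
\[
\|u-y_j\|_{Y(D)}\ge\|u-y_j\chi_{B_R}\|_{Y(D)}-\|y_j\chi_{D\setminus B_R}\|_{Y(D)}>\bigl(\|I\|-\tfrac{\delta}{2}\bigr)-\tfrac{\delta}{2}=\|I\|-\delta=\varepsilon .
\]
As $j$ was arbitrary, this contradicts the assumed covering, whence $e_m(I)=\|I\|$ for every $m\in\N$.

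I expect the only genuinely delicate point to be the confinement step in the second paragraph: absolute continuity of the norm of $Y(D)$ is exactly what allows each $y_j$ to be replaced, up to an arbitrarily small error, by a function supported in a fixed ball, and the hypothesis $k\ge1$ is what makes it possible to translate a near-extremal function entirely off that ball without altering any of its norms. This is also what makes the bound sharp: the more obvious idea --- spreading a single bump over many far-apart positions and exploiting that distinct translates are disjointly supported --- compares translates only with one another and in general recovers merely a proportion of $\|I\|$ that can be strictly smaller than $1$, whereas here the far-out bump is compared directly with the now essentially localized centres $y_j$, which yields $\|I\|$ exactly.
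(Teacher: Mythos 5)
Your proof is correct, and its overall strategy coincides with the paper's: assume a covering of the image of the unit ball by finitely many balls of radius less than $\|I\|$, localize the centres to a bounded region, and then use the rearrangement invariance of $X(D)$ and $Y(D)$ to translate a near-extremal compactly supported test function along the unbounded factor $\R^k$ until its support is disjoint from that region, so that it escapes every covering ball. The one genuine difference is how the centres are localized. The paper approximates each centre $g_j$ in $Y(D)$ by a smooth compactly supported function $\tilde g_j$, which requires the density result of Proposition~\ref{prop:density} (proved via Lusin's theorem and mollification); you instead truncate each $y_j$ to $y_j\chi_{B_R}$, invoking the absolute continuity of the norm of $Y(D)$ directly on the sets $D\setminus B_R$ and then using only the lattice property of the norm to get $\|u-y_j\chi_{B_R}\|_{Y(D)}\ge\|u\|_{Y(D)}$. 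Your route is slightly more economical --- it makes Proposition~\ref{prop:density} unnecessary for this theorem --- while the paper's route packages the same idea into a reusable density statement. Both arguments are complete; the only routine step you leave implicit is the normalization of the smooth approximant $u_0$ so that $\|u_0\|_{W^1X(D)}\le1$ while retaining $\|u_0\|_{Y(D)}>\|I\|-\delta/2$, which is harmless since the approximation is taken in the $W^1X(D)$ norm and $I$ is bounded.
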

\begin{proof}
Throughout this proof, we denote the canonical embedding $W_0^1X(D)\hookrightarrow Y(D)$ by $I$. Suppose that there is $m\in\N$ such that $e_m(I)<\|I\|$. Let $r,\tilde{r}>0$ be such that $e_m(I)<r<\tilde{r}<\|I\|$. As $r>e_m(I)$, there are functions $g_j\in Y(D)$,  $j=1,\dots,2^{m-1}$, such that
\begin{equation}\label{thm:snumbersstrip:entropy}
\forall u\in W_0^1X(D), \|u\|_{W^1X(D)}\leq1,\exists j\in\{1,\dots,2^{m-1}\}\colon\|u-g_j\|_{Y(D)}\leq r.
\end{equation}
Furthermore, since (smooth) compactly supported functions are dense in $Y(D)$ by \hyperref[prop:density]{Proposition~\ref*{prop:density}}, there are functions $\tilde{g}_j\in\mathcal C^\infty_0(D)$ such that
\begin{equation}\label{thm:snumbersstrip:densityLp}
\|g_j-\tilde{g}_j\|_{Y(D)}<\tilde{r}-r\quad\text{for every $j\in\{1,\dots,2^{m-1}\}$}.
\end{equation}

For every $l>0$, set $D_l=(-l,l)^k\times\prod\limits_{i=1}^{n-k}(a_i,b_i)$. Since $\|I\|=\lim\limits_{l\to\infty}\sup\limits_{\substack{u\in\mathcal C^\infty_0(D_l)\\u\neq0}}\frac{\|u\|_{Y(D)}}{\|u\|_{W^1X(D)}}$ and the functions $\tilde{g}_j$ are compactly supported in $D$, there is $l>0$ such that
\begin{align}
\sup\limits_{\substack{u\in\mathcal C^\infty_0(D_l)\\u\neq0}}\frac{\|u\|_{Y(D)}}{\|u\|_{W^1X(D)}}>\tilde{r}\label{thm:snumbersstrip:almostextremal}\\
\intertext{and}
\bigcup_{j=1}^{2^{m-1}}\spt \tilde{g}_j\subseteq D_l.\label{thm:snumbersstrip:sptofgj}
\end{align}

Combining \eqref{thm:snumbersstrip:almostextremal} with the translation invariance of the $Y(D)$ and $W^1X(D)$ norms in the first $k$-directions, which follows immediately from the rearrangement invariance of the $X(D)$ and $Y(D)$ norms, we see that there is a function $u\in\mathcal C^\infty_0(\tilde{D}_l)$ where $\tilde{D}_l=(l,3l)^k\times\prod\limits_{i=1}^{n-k}(a_i,b_i)$ such that
\begin{equation}\label{thm:snumbersstrip:almostextremaltranslated}
\|u\|_{W^1X(D)}=1\quad\text{and}\quad\|u\|_{Y(D)}>\tilde{r}.
\end{equation}
Finally, using \eqref{thm:snumbersstrip:densityLp}, \eqref{thm:snumbersstrip:sptofgj} and \eqref{thm:snumbersstrip:almostextremaltranslated}, we see that
\begin{align*}
\|u-g_j\|_{Y(D)}&\geq\|u-\tilde{g}_j\|_{Y(D)}-\|\tilde{g}_j-g_j\|_{Y(D)}\\
&\geq\|\left(u-\tilde{g}_j\right)\chi_{\tilde{D}_l}\|_{Y(D)}-\|\tilde{g}_j-g_j\|_{Y(D)}\\
&> \tilde{r}-(\tilde{r}-r)=r
\end{align*}
for every $j=1,\dots,2^{m-1}$, which contradicts \eqref{thm:snumbersstrip:entropy}. Hence $e_m(I)\geq\|I\|$, and so $e_m(I)=\|I\|$.
\end{proof}

\begin{remark}
Note that our choice of the norm on $W^1X(D)$ is immaterial in \hyperref[thm:maxnoncompact]{Theorem~\ref*{thm:maxnoncompact}} and the theorem remains valid even when $W^1X(D)$ is endowed with any equivalent norm. In particular, the assertion of the theorem is also true for the standard Sobolev embeddings $W_0^{1,p}(D)\hookrightarrow L^q(D)$ with either $p\in[1,n)$ and $q\in\left[p,\frac{np}{n-p}\right]$ or $p\in[n,\infty)$ and $q\in[p,\infty)$ (cf.~\citep[Theorem~4.12]{AF:03}).
\end{remark}

In the case where both domain and target space are the Lebesgue space $L^p$, not only do we know that the corresponding Sobolev embedding is maximally noncompact, but we also know the exact value of the norm of the embedding.
\begin{proposition}\label{prop:normonstrip}
Let $p\in(1,\infty)$. Let $k\in\{0,1,\dots,n-1\}$ and $-\infty<a_i<b_i<\infty$, $i=1,\dots,n-k$. Set $D=\R^k\times\prod\limits_{i=1}^{n-k}(a_i,b_i)\subseteq\rn$. The norm of the canonical embedding $W_0^{1,p}(D)\hookrightarrow L^p(D)$ satisfies
\begin{equation}\label{prop:normonstrip:sup}
\|W_0^{1,p}(D)\hookrightarrow L^p(D)\|=\left(1+\pi^p_p(p-1)\sum_{i=1}^{n-k}\frac1{(b_i-a_i)^p}\right)^{-\frac1{p}}.
\end{equation}
\end{proposition}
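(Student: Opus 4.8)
The plan is to reduce the computation of the embedding norm to a one-dimensional eigenvalue problem by separating variables, and then to identify the optimal profile in each bounded direction with a $p$-trigonometric function. First I would observe that the norm in question is
\[
\|W_0^{1,p}(D)\hookrightarrow L^p(D)\| = \left(\inf_{u\in\mathcal C_0^\infty(D)\setminus\{0\}}\frac{\|u\|_{L^p(D)}^p+\||\nabla u|_{\ell^p}\|_{L^p(D)}^p}{\|u\|_{L^p(D)}^p}\right)^{-1/p} = \left(1+\Lambda\right)^{-1/p},
\]
where $\Lambda=\inf\{\||\nabla u|_{\ell^p}\|_{L^p(D)}^p/\|u\|_{L^p(D)}^p : u\in\mathcal C_0^\infty(D)\setminus\{0\}\}$ is (the $p$-th power of) the optimal Poincar\'e constant on the slab. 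Since $D=\R^k\times R$ with $R=\prod_{i=1}^{n-k}(a_i,b_i)$, and functions in $\mathcal C_0^\infty(D)$ are not compactly supported in the $\R^k$ directions in general, I would argue that $\Lambda$ coincides with the analogous infimum taken over $\mathcal C_0^\infty(R)$ (functions of the bounded variables only): the ``$\geq$'' direction follows by testing with $u(x',y)=\varphi(x')\psi(y)$, letting $\varphi$ spread out over $\R^k$ so that the contribution of $|\partial_{x_j}u|^p$ becomes negligible, and the ``$\leq$'' direction follows because any admissible $u$ on $D$ can be restricted to slices $\{x'\}\times R$, and the slice-wise Poincar\'e inequality integrates up (discarding the nonnegative $\partial_{x_j}$ terms in the numerator only helps).

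Next I would treat the rectangle $R$. The key separation step is the inequality
\[
\||\nabla u|_{\ell^p}\|_{L^p(R)}^p = \int_R\sum_{i=1}^{n-k}\left|\frac{\partial u}{\partial y_i}\right|^p\,\d y \geq \sum_{i=1}^{n-k}\mu_i\,\|u\|_{L^p(R)}^p,
\]
where $\mu_i=\inf\{\|\psi'\|_{L^p(a_i,b_i)}^p/\|\psi\|_{L^p(a_i,b_i)}^p : \psi\in\mathcal C_0^\infty(a_i,b_i)\setminus\{0\}\}$ is the first eigenvalue of the one-dimensional $p$-Laplacian (pseudo-$p$-Laplacian, which coincides in dimension one) with Dirichlet conditions on $(a_i,b_i)$. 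This bound is obtained by applying, for each fixed $i$ and each fixed choice of the remaining variables, the one-dimensional Poincar\'e inequality in the $i$-th variable and then integrating; summing over $i$ gives $\Lambda\geq\sum_i\mu_i$. Sharpness is obtained by the product test function $u(y)=\prod_{i=1}^{n-k}\psi_i(y_i)$ with $\psi_i$ an optimizer (extended smoothly/approximated) for $\mu_i$: a direct computation shows the ratio for such a product is exactly $\sum_i\mu_i$, so $\Lambda=\sum_{i=1}^{n-k}\mu_i$.

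It remains to compute $\mu_i$. The Euler--Lagrange equation for the one-dimensional Dirichlet $p$-Laplacian eigenvalue problem on $(a_i,b_i)$ is $-(|\psi'|^{p-2}\psi')'=\mu_i|\psi|^{p-2}\psi$, whose first eigenfunction is, up to scaling and translation, $\psi(t)=\sin_p\!\big(\pi_p\tfrac{t-a_i}{b_i-a_i}\big)$, with first eigenvalue $\mu_i=(p-1)\big(\tfrac{\pi_p}{b_i-a_i}\big)^p$; this is classical and is exactly where the generalized $p$-trigonometric functions recalled in Section~2 enter (a change of variables reduces $(a_i,b_i)$ to a reference interval of length $\pi_p$, on which $\sin_p$ solves the equation with eigenvalue $p-1$). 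Substituting gives $\Lambda=(p-1)\pi_p^p\sum_{i=1}^{n-k}(b_i-a_i)^{-p}$ and hence \eqref{prop:normonstrip:sup}. (Note that when $p=2$, $\pi_p=\pi$ and $\mu_i=(\pi/(b_i-a_i))^2$ as expected; comparison with the formula in the introduction also requires the elementary identity $\pi_p=\tfrac{2\pi}{p\sin(\pi/p)}$.) I expect the main obstacle to be the reduction from $D$ to $R$ rigorously — in particular justifying that spreading mass over the $\R^k$ directions drives the gradient contribution from those directions to zero while keeping the test function admissible in $W_0^{1,p}$ — together with the care needed to pass from the smooth optimizers required by the density definition of $W_0^{1,p}$ to the (merely Lipschitz) $p$-trigonometric profiles via approximation; the one-dimensional eigenvalue computation itself is standard.
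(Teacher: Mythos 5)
Your proposal is correct and follows essentially the same route as the paper: rewrite the norm as $(1+\lambda)^{-1/p}$, obtain the upper bound on $\lambda$ by testing with products of $\sin_p$ profiles spread over $(-l,l)^k$ in the unbounded directions, and obtain the lower bound by applying the sharp one-dimensional Poincar\'e inequality $\mu_i=\pi_p^p(p-1)/(b_i-a_i)^p$ slice-wise in each bounded direction and integrating via Fubini while discarding the $\partial_{x_j}$ terms. The only cosmetic differences are that you interpose an intermediate reduction to the rectangle $R$ (and appear to swap the labels of the two inequality directions there), whereas the paper works directly on $D$; also note that $\sin_p$ is $C^1$ and the product test function lies in $W_0^{1,p}(D_l)$ outright, so no smoothing of the optimizer is actually needed.
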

\begin{proof}
Since
\begin{equation*}
\|W_0^{1,p}(D)\hookrightarrow L^p(D)\|^p=\sup_{\substack{u\in W_0^{1,p}(D)\\u\neq0}}\frac{\|u\|^p_{L^p(D)}}{\|u\|^p_{L^p(D)}+\||\nabla u|_{\ell^p}\|^p_{L^p(D)}} = \sup_{\substack{u\in W_0^{1,p}(D)\\u\neq0}}\frac1{1+\frac{\||\nabla u|_{\ell^p}\|_{L^p(D)}^p}{\|u\|^p_{L^p(D)}}},
\end{equation*}
we clearly have that
\begin{equation}\label{prop:normonstrip:supalt}
\|W_0^{1,p}(D)\hookrightarrow L^p(D)\|=\left(1+\inf_{\substack{u\in W_0^{1,p}(D)\\u\neq0}}\frac{\||\nabla u|_{\ell^p}\|^p_{L^p(D)}}{\|u\|^p_{L^p(D)}}\right)^{-\frac1{p}}.
\end{equation}
Let $\lambda$ denote the infimum in \eqref{prop:normonstrip:supalt}. We shall show that
\begin{equation}\label{prop:normonstrip:lambda}
\lambda = \pi^p_p(p-1)\sum_{i=1}^{n-k}\frac1{(b_i-a_i)^p}.
\end{equation}

Assume that $k>0$. For each $l>0$, we set $D_l=(-l,l)^{k}\times\prod\limits_{i=1}^{n-k}(a_i,b_i)$ and we also define the function $u_l$ as
\begin{equation*}
u_l(x, y)=\left(\prod_{j=1}^k\sin_p\left(\frac{\pi_p x_i}{l}\right)\right)\left(\prod_{i=1}^{n-k}\sin_p\left(\frac{\pi_p(y_i-a_i)}{b_i-a_i}\right)\right),\ (x_1,\dots,x_k,y_1,\dots,y_{n-k})\in D_l,
\end{equation*}
and we extend it outside the rectangle $D_l$ by zero. Since $u_l\in W^{1,p}_0(D_l)$, we have that $u_l\in W_0^{1,p}(D)$. It follows from basic properties of the $p$-trigonometric functions (\citep{L:95}, also~\citep[Chapter~2, (2.22), (2.23)]{EL:11}) and Fubini's theorem that
\begin{align*}
\|u_l\|_{L^p(D)}^p&=\left(\frac{2l}{p}\right)^{k}\prod_{i=1}^{n-k}\frac{b_i-a_i}{p}\quad\text{and}\\
\||\nabla u_l|_{\ell^p}\|_{L^p(D)}^p&=\frac{\pi^p_p}{p'p^{n-k-1}}\left(\frac{2l}{p}\right)^{k}\left(\prod_{i=1}^{n-k}(b_i-a_i)\right)\left(\frac{k}{l^p}+\sum_{i=1}^{n-k}\frac1{(b_i-a_i)^p}\right),
\end{align*}
whence
\begin{equation*}
\lambda\leq\pi^p_p(p-1)\left(\frac{k}{l^p}+\sum_{i=1}^{n-k}\frac1{(b_i-a_i)^p}\right).
\end{equation*}
Hence, since $l>0$ was arbitrary, we obtain that
\begin{equation}\label{prop:normonstrip:upper}
\lambda\leq\pi^p_p(p-1)\sum_{i=1}^{n-k}\frac1{(b_i-a_i)^p}.
\end{equation}

Next, it is well known (\citep[page~28]{O:84}, also~\citep[Theorem~3.3]{EL:11}) that
\begin{equation}\label{prop:normonstrip:firstonedimeigen}
\inf_{\substack{v\in W^{1,p}_0((a,b))\\v\neq0}}\frac{\|v'\|^p_{L^p((a,b))}}{\|v\|^p_{L^p((a,b))}}=\pi^p_p\frac{p-1}{(b-a)^p}\quad\text{whenever $-\infty<a<b<\infty$}.
\end{equation}
Since smooth compactly supported functions are dense in $W_0^{1,p}(D)$, we have that
\begin{equation*}
\lambda=\inf_{\substack{u\in \mathcal C^\infty_0(D)\\u\neq0}}\frac{\||\nabla u|_{\ell^p}\|^p_{L^p(D)}}{\|u\|^p_{L^p(D)}}.
\end{equation*}
Let $u\in \mathcal C^\infty_0(D)$. Since the function $(a_i,b_i)\ni t\mapsto u(x,y_1,\dots,y_{i-1},t,y_{i+1},\dots,y_{n-k})$ is in $\mathcal C^\infty_0((a_i,b_i))$ for each $i\in\{1,\dots,n-k\}$ and every fixed $x\in\R^k$, $y_j\in(a_j,b_j)$, $j\in\{1,\dots,n-k\}\setminus\{i\}$, it follows from \eqref{prop:normonstrip:firstonedimeigen} that
\begin{align*}
&\int_{a_i}^{b_i}\left|\frac{\partial u}{\partial t}(x,y_1,\dots,y_{i-1},t,y_{i+1},\dots,y_{n-k})\right|^p\,\d t\\
&\quad\geq\pi^p_p\frac{p-1}{(b_i-a_i)^p}\int_{a_i}^{b_i}\left|u(x,y_1,\dots,y_{i-1},t,y_{i+1},\dots,y_{n-k})\right|^p\,\d t.
\end{align*}
Hence, thanks to Fubini's theorem, $\|\frac{\partial u}{\partial y_i}\|_{L^p(D)}^p\geq\pi^p_p\frac{p-1}{(b_i-a_i)^p}\|u\|_{L^p(D)}^p$ for every  $i\in\{1,\dots,n-k\}$. Therefore,
\begin{equation*}
\lambda\geq\inf_{\substack{u\in \mathcal C^\infty_0(D)\\u\neq0}}\frac{\left\|\left(\sum\limits_{i=1}^{n-k}\left|\frac{\partial u}{\partial y_i}\right|^p\right)^\frac1{p}\right\|^p_{L^p(D)}}{\|u\|^p_{L^p(D)}}=\inf_{\substack{u\in \mathcal C^\infty_0(D)\\u\neq0}}\frac{\sum\limits_{i=1}^{n-k}\|\frac{\partial u}{\partial y_i}\|^p_{L^p(D)}}{\|u\|^p_{L^p(D)}}\geq\pi^p_p(p-1)\sum_{i=1}^{n-k}\frac1{(b_i-a_i)^p},
\end{equation*}
which combined with \eqref{prop:normonstrip:upper} implies \eqref{prop:normonstrip:lambda}.

Finally, equality \eqref{prop:normonstrip:sup} follows from equalities \eqref{prop:normonstrip:supalt} and \eqref{prop:normonstrip:lambda}.

The case where $k=0$ is actually simpler and can be proved along the same lines, and so we omit its proof. 
\end{proof}

\begin{remark}
By using the identity $\pi_p=\frac{2\pi}{p\sin\left(\frac{\pi}{p}\right)}$ (e.g.~\citep[(2.7)]{EL:11}), the norm of the  canonical embedding $W_0^{1,p}(D)\hookrightarrow L^p(D)$ can be expressed by means of the standard trigonometric functions as
\begin{equation*}
\|W_0^{1,p}(D)\hookrightarrow L^p(D)\|=\left(1+(p-1)\left(\frac{2\pi}{p\sin\left(      \frac{\pi}{p}\right)}\sum_{i=1}^{n-k}\frac1{(b_i-a_i)^p}\right)^p\right)^{-\frac1{p}}.
\end{equation*}
\end{remark}

\begin{corollary}\label{cor:maximazeronrectangle}
Let $p\in(1,\infty)$ and $R=\prod\limits_{i=1}^n(a_i,b_i)$ where $-\infty<a_i<b_i<\infty$. The norm of the canonical embedding $W_0^{1,p}(R)\hookrightarrow L^p(R)$ is attained by the function $u$ defined as
\begin{equation}\label{cor:maximizer}
u(x)=\prod_{i=1}^{n}\sin_p\left(\frac{\pi_p(x_i-a_i)}{b_i-a_i}\right),\ x\in R.
\end{equation}
Moreover, $u$ is the unique positive maximizer up to a positive multiplicative constant.
\end{corollary}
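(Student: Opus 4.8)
The plan is to treat the two assertions separately: first that the function $u$ in \eqref{cor:maximizer} attains the operator norm, and then that it is the only positive maximizer up to a positive scalar. Throughout, abbreviate $\sigma_i(t)=\sin_p(\pi_p(t-a_i)/(b_i-a_i))$, so that $u(x)=\prod_{i=1}^n\sigma_i(x_i)$, and set $\lambda_i=\pi_p^p(p-1)/(b_i-a_i)^p$ and $\lambda=\sum_{i=1}^n\lambda_i$. By Proposition~\ref{prop:normonstrip} with $k=0$, together with \eqref{prop:normonstrip:supalt}, the $p$-th power of the operator norm equals $(1+\lambda)^{-1}$, and it is attained at a nonzero $w\in W_0^{1,p}(R)$ precisely when $\||\nabla w|_{\ell^p}\|_{L^p(R)}^p=\lambda\|w\|_{L^p(R)}^p$. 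For the attainment I would first note that $u$ is $C^1$ on $\overline R$ and vanishes on $\partial R$ (since $\sigma_i(a_i)=\sigma_i(b_i)=0$), whence $u\in W_0^{1,p}(R)$; then, since $\partial u/\partial x_j=\sigma_j'(x_j)\prod_{i\neq j}\sigma_i(x_i)$, Fubini's theorem gives $\|u\|_{L^p(R)}^p=\prod_{i=1}^n\|\sigma_i\|_{L^p((a_i,b_i))}^p$ and $\||\nabla u|_{\ell^p}\|_{L^p(R)}^p=\sum_{j=1}^n\|\sigma_j'\|_{L^p((a_j,b_j))}^p\prod_{i\neq j}\|\sigma_i\|_{L^p((a_i,b_i))}^p$, so that the Rayleigh quotient of $u$ equals $\sum_{j=1}^n\|\sigma_j'\|_{L^p}^p/\|\sigma_j\|_{L^p}^p$. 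This is, up to notation, the computation already carried out (for $k\geq1$) in the proof of Proposition~\ref{prop:normonstrip}: by the $p$-trigonometric identities in \citep{L:95} and \citep{EL:11}, $\sigma_j$ realizes the infimum in \eqref{prop:normonstrip:firstonedimeigen}, so each summand equals $\lambda_j$, the quotient equals $\lambda$, and $u$ attains the norm.

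For uniqueness, let $w\in W_0^{1,p}(R)$ be positive a.e.\ with $\||\nabla w|_{\ell^p}\|_{L^p(R)}^p=\lambda\|w\|_{L^p(R)}^p$. The key step is a slicing argument. I would first check that, for each $j\in\{1,\dots,n\}$ and a.e.\ fixed choice $x'$ of the remaining $n-1$ variables, the slice $t\mapsto w(\dots,t,\dots)$ belongs to $W_0^{1,p}((a_j,b_j))$; this follows by approximating $w$ in $W^{1,p}(R)$ by functions in $\mathcal C_0^\infty(R)$, passing to a.e.\ slices along a subsequence via Fubini's theorem, and using the closedness of $W_0^{1,p}((a_j,b_j))$ in $W^{1,p}((a_j,b_j))$. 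Then \eqref{prop:normonstrip:firstonedimeigen} applies to a.e.\ slice, giving
\[
\int_{a_j}^{b_j}\left|\frac{\partial w}{\partial x_j}(\dots,t,\dots)\right|^p\,\d t\ \geq\ \lambda_j\int_{a_j}^{b_j}\left|w(\dots,t,\dots)\right|^p\,\d t .
\]
Integrating over $x'$ and summing over $j$ recovers $\||\nabla w|_{\ell^p}\|_{L^p(R)}^p\geq\lambda\|w\|_{L^p(R)}^p$, which is an equality by hypothesis, so, the integrands being nonnegative, every slice inequality is an equality as well. Hence, for each $j$, a.e.\ $x_j$-slice of $w$ is a positive, nontrivial minimizer of the one-dimensional Rayleigh quotient on $(a_j,b_j)$; since the first Dirichlet eigenvalue of the one-dimensional $p$-Laplacian is simple with positive eigenfunction proportional to $\sigma_j$ (see \citep{O:84} and \citep{EL:11}), I would conclude that $w(x)=c_j(x')\,\sigma_j(x_j)$ for a.e.\ $x\in R$, with $c_j>0$ measurable (for instance $c_j(x')=\|w(\cdot,x')\|_{L^p((a_j,b_j))}/\|\sigma_j\|_{L^p((a_j,b_j))}$), for every $j=1,\dots,n$.

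Finally I would combine these $n$ factorizations into $w=Cu$ a.e.\ for a positive constant $C$, by a routine induction based on Fubini's theorem: a measurable function on $R$ that is a.e.\ simultaneously a function of $x_j$ times a function of the remaining variables and a function of $x_{j'}$ times a function of the remaining variables, for $j\neq j'$, must factor in both $x_j$ and $x_{j'}$; iterating peels off the factors $\sigma_1,\dots,\sigma_n$ one at a time and turns $w(x)=c_1(x')\sigma_1(x_1)$ into $w(x)=C\prod_{i=1}^n\sigma_i(x_i)=Cu(x)$. The attainment part is essentially the computation already present in Proposition~\ref{prop:normonstrip}; the part I expect to be the main obstacle is the uniqueness, and within it the two points needing genuine care are (i) showing that a.e.\ one-dimensional slice of a $W_0^{1,p}(R)$-function again lies in $W_0^{1,p}$ of the corresponding interval, so that the sharp inequality \eqref{prop:normonstrip:firstonedimeigen} is available slicewise, and (ii) the measure-theoretic bookkeeping that upgrades the slicewise factorizations to the global product formula, together with the appeal to simplicity of the one-dimensional first eigenvalue.
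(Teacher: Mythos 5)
Your proposal is correct, but it takes a genuinely different route from the paper in the uniqueness half. For the attainment, the two arguments coincide: the function $u$ in \eqref{cor:maximizer} is exactly the test function used in the proof of Proposition~\ref{prop:normonstrip} (in the case $k=0$), and the Fubini computation of its Rayleigh quotient together with \eqref{prop:normonstrip:firstonedimeigen} shows that it realizes the infimum $\lambda=\pi_p^p(p-1)\sum_{i=1}^n(b_i-a_i)^{-p}$; the paper simply points back to that proof rather than repeating the computation. For uniqueness, however, the paper gives no argument of its own: it quotes \citep[Lemma~2.1]{BK:04}, where simplicity of the positive minimizer of the Rayleigh quotient for the pseudo-$p$-Laplacian is established on an arbitrary bounded domain. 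Your slicing argument --- a.e.\ slices of a $W_0^{1,p}(R)$-function lie in $W_0^{1,p}$ of the corresponding interval, equality in the summed slice inequalities forces a.e.\ slice to be a one-dimensional minimizer, simplicity of the one-dimensional first eigenvalue yields $w(x)=c_j(x')\sigma_j(x_j)$ for every $j$, and a Fubini induction glues these factorizations into $w=Cu$ --- is a legitimate, self-contained alternative that exploits the product structure of $R$. What it buys is independence from \citep{BK:04}; what it costs is that it only works on rectangles, and it leans on two auxiliary facts that you correctly flag but would still have to write out or source precisely: that every nontrivial minimizer (not merely every positive classical eigenfunction) of the one-dimensional Rayleigh quotient is a scalar multiple of $\sin_p$, and the a.e.-slice membership lemma for $W_0^{1,p}$ of a rectangle. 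Both are standard, so I see no genuine gap, only more work than the paper chose to do.
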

\begin{proof}
Since a function $u\in W_0^{1,p}(R)$ maximizes $\sup\limits_{\substack{u\in W_0^{1,p}(R)\\u\neq0}}\frac{\|u\|^p_{L^p(R)}}{\|u\|^p_{L^p(R)}+\||\nabla u|_{\ell^p}\|^p_{L^p(R)}}$ if and only if it minimizes $\inf\limits_{\substack{u\in W_0^{1,p}(R)\\u\neq0}}\frac{\||\nabla u|_{\ell^p}\|^p_{L^p(R)}}{\|u\|^p_{L^p(R)}}$, the fact that the function defined by \eqref{cor:maximizer} is a positive maximizer follows immediately from the proof of \hyperref[prop:normonstrip]{Proposition~\ref*{prop:normonstrip}}. The uniqueness (up to a positive multiplicative constant) of the positive minimizer of the Rayleigh quotient $$\inf\limits_{\substack{u\in W_0^{1,p}(D)\\u\neq0}}\frac{\||\nabla u|_{\ell^p}\|^p_{L^p(R)}}{\|u\|^p_{L^p(R)}}$$ was proved in \citep[Lemma~2.1]{BK:04}.
\end{proof}

\begin{remark}
It can be routinely shown that the extreme function for the Rayleigh quotient $\inf\limits_{\substack{u\in W_0^{1,p}(D)\\u\neq0}}\frac{\||\nabla u|_{\ell^p}\|^p_{L^p(R)}}{\|u\|^p_{L^p(R)}}$ is the first eigenvalue of the
pseudo-$p$-Laplacian operator with Dirichlet boundary conditions, i.e.:
\begin{equation}\label{pseudopLaplacian} \tilde{\Delta}_p u = \tilde{\lambda_p} |u|^{p-2}u, \quad \mbox{with } u=0 \mbox{ on } \partial R,
\end{equation}
where 
\[\tilde{\Delta}_p u = \sum_{i=1}^n \frac{\partial}{\partial x_i} \left( 
\left|\frac{\partial u}{\partial x_i} \right|^{p-2} \frac{\partial u}{\partial x_i} \right).
\]
Then it follows from \hyperref[cor:maximazeronrectangle]{Corollary~\ref*{cor:maximazeronrectangle}} that the 
first eigenfunction for the Dirichlet problem \eqref{pseudopLaplacian} for the pseudo-p-Laplacian operator on the domain $R$ is the function defined by \eqref{cor:maximizer}.  In the case where $R$ is a cube, this was already observed in \citep[Example~2.4]{BK:04}.

This coincides well with the classical result for $p=2$. However, whether all eigenfunctions for \eqref{pseudopLaplacian} on the domain $R$ are of the form
\[\prod_{i=1}^{n}\sin_p\left(\frac{\pi_pk_i(x_i-a_i)}{b_i-a_i}\right),\ (x_1,\dots,x_n)\in R,\ \text{for some $k_i \in \N$},
\]
remains an open question if $p\neq2$.
\end{remark}

Not only is the canonical embedding $W_0^{1,p}(D)\hookrightarrow L^p(D)$ maximally noncompact, but also all its strict $s$-numbers coincide with the norm of the embedding.
\begin{theorem}
Let $p\in(1,\infty)$. Let $k\in\{1,\dots,n-1\}$ and $-\infty<a_i<b_i<\infty$, $i=1,\dots,n-k$. Set $D=\R^k\times(a_1,b_1)\times\prod\limits_{i=1}^{n-k}(a_i,b_i)\subseteq\rn$. We have that
\begin{equation}\label{thm:snumbersstrip:eq}
\begin{aligned}
a_m(I)&=b_m(I)=c_m(I)=d_m(I)=i_m(I)=m_n(I)\\
&=e_m(I)=\|I\|=\left(1+\pi^p_p(p-1)\sum_{i=1}^{n-k}\frac1{(b_i-a_i)^p}\right)^{-\frac1{p}}
\end{aligned}
\end{equation}
for every $m\in\N$, where $I$ stands for the canonical embedding $W_0^{1,p}(D)\hookrightarrow L^p(D)$.

In particular,
\begin{equation*}
s_m(I)=\|I\|=\left(1+\pi^p_p(p-1)\sum_{i=1}^{n-k}\frac1{(b_i-a_i)^p}\right)^{-\frac1{p}}
\end{equation*}
for each strict $s$-number $s$ and every $m\in\N$, and the canonical embedding $W_0^{1,p}(D)\hookrightarrow L^p(D)$ is maximally noncompact.
\end{theorem}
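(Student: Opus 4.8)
The plan is to deduce the whole display \eqref{thm:snumbersstrip:eq} from a single lower bound, namely $i_m(I)\geq\|I\|$ for every $m\in\N$. Indeed, the equality $e_m(I)=\|I\|$ for all $m$ is already contained in \hyperref[thm:maxnoncompact]{Theorem~\ref*{thm:maxnoncompact}} and the remark following it (the spaces $W_0^{1,p}(D)$ and $W_0^1L^p(D)$ coincide up to equivalence of norms, and $L^p(D)$ has absolutely continuous norm because $p<\infty$), while the closed form of $\|I\|$ is furnished by \hyperref[prop:normonstrip]{Proposition~\ref*{prop:normonstrip}}. For any strict $s$-number, property (S1) together with the monotonicity of $\{s_m(I)\}_m$ gives $s_m(I)\leq s_1(I)=\|I\|$, and \eqref{prel:isonumssmallest} gives $s_m(I)\geq i_m(I)$; hence once $i_m(I)\geq\|I\|$ is known we obtain $\|I\|=s_1(I)\geq s_m(I)\geq i_m(I)\geq\|I\|$, so that every strict $s$-number of $I$ equals $\|I\|$, which in particular covers $a_m(I),b_m(I),c_m(I),d_m(I),i_m(I),m_n(I)$.

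To estimate $i_m(I)$ from below, fix $m\in\N$. For $l>0$ let $u_l\in W_0^{1,p}(D_l)\subseteq W_0^{1,p}(D)$ be the product of $p$-sines on the box $D_l=(-l,l)^k\times\prod_{i=1}^{n-k}(a_i,b_i)$ constructed (and extended by zero) in the proof of \hyperref[prop:normonstrip]{Proposition~\ref*{prop:normonstrip}}; from the explicit norms computed there, $\rho_l^{-p}:=1+\||\nabla u_l|_{\ell^p}\|_{L^p(D)}^p/\|u_l\|_{L^p(D)}^p\to 1+\lambda$ with $\lambda=\pi_p^p(p-1)\sum_{i=1}^{n-k}(b_i-a_i)^{-p}$, so $\rho_l:=\|u_l\|_{L^p(D)}/\|u_l\|_{W^{1,p}(D)}\to(1+\lambda)^{-1/p}=\|I\|$. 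Using the translation invariance of the $L^p(D)$ and $W^{1,p}(D)$ norms in the first $k$ coordinate directions (which follows from rearrangement invariance, exactly as in the proof of \hyperref[thm:maxnoncompact]{Theorem~\ref*{thm:maxnoncompact}}), choose translates $v_1,\dots,v_m$ of $u_l$ in those directions with pairwise disjoint supports. Since these supports are disjoint and the gradient norm is built on $\ell^p$, for every $c=(c_1,\dots,c_m)$ we have $\bigl\|\sum_j c_jv_j\bigr\|_{L^p(D)}^p=\sum_j|c_j|^p\,\|u_l\|_{L^p(D)}^p$ and $\bigl\|\sum_j c_jv_j\bigr\|_{W^{1,p}(D)}^p=\sum_j|c_j|^p\,\|u_l\|_{W^{1,p}(D)}^p$, so $\spn\{v_1,\dots,v_m\}$ carries $\ell^p_m$ isometrically (up to a fixed scalar) into both spaces and the Rayleigh quotient is constantly $\rho_l$ on it.

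Now take $G=\ell^p_m$ and define $B\colon G\to W_0^{1,p}(D)$ by $B(c)=\|u_l\|_{W^{1,p}(D)}^{-1}\sum_{j=1}^m c_jv_j$, so that $\|B\|=1$ by the $\ell^p$-additivity just displayed. Define $A\colon L^p(D)\to G$ by $A(f)=\bigl(\int_D f\,\phi_j\,\d x\bigr)_{j=1}^m$, where $\phi_j=\alpha\,|v_j|^{p-2}v_j\in L^{p'}(D)$ and $\alpha=\|u_l\|_{W^{1,p}(D)}\,\|u_l\|_{L^p(D)}^{-p}$ is chosen so that $\|u_l\|_{W^{1,p}(D)}^{-1}\int_D v_j\phi_j\,\d x=1$. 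Since the supports are disjoint, $\int_D v_i\phi_j\,\d x=0$ for $i\neq j$, and therefore $AIB=\mathrm{id}_G$. To bound $\|A\|$, note that $\|\phi_j\|_{L^{p'}(D)}=\alpha\,\|u_l\|_{L^p(D)}^{p-1}=\|u_l\|_{W^{1,p}(D)}/\|u_l\|_{L^p(D)}=\rho_l^{-1}$ for every $j$ (using $(p-1)p'=p$), and that disjointness of the supports of the $\phi_j$ together with Hölder's inequality gives $\|A(f)\|_{\ell^p_m}^p=\sum_j\bigl|\int_D f\phi_j\,\d x\bigr|^p\leq\sum_j\|\phi_j\|_{L^{p'}(D)}^p\,\|f\chi_{\spt v_j}\|_{L^p(D)}^p\leq\rho_l^{-p}\,\|f\|_{L^p(D)}^p$; hence $\|A\|\leq\rho_l^{-1}$. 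Consequently $i_m(I)\geq\|A\|^{-1}\|B\|^{-1}\geq\rho_l$ for every $l>0$, and letting $l\to\infty$ yields $i_m(I)\geq\|I\|$, completing the plan.

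The one genuinely delicate point is the construction of $A$ and the estimate $\|A\|\leq\rho_l^{-1}$: everything hinges on keeping the supports of the translated near-extremals disjoint (so that the $\ell^p$-sum of the coordinate functionals is dominated by the $L^p$-norm) and on the precise $L^{p'}$-normalization of the duality maps $|v_j|^{p-2}v_j$. The remaining ingredients — the $\ell^p$-additivity of the Sobolev norm across disjoint supports, the translation invariance inherited from rearrangement invariance, and the convergence $\rho_l\to\|I\|$ read off from the proof of \hyperref[prop:normonstrip]{Proposition~\ref*{prop:normonstrip}} — are routine.
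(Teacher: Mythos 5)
Your proof is correct and follows essentially the same route as the paper: reduce everything to the single lower bound $i_m(I)\geq\|I\|$ via \eqref{prel:isonumssmallest} and (S1), then realize $\ell^p_m$ inside the embedding through $m$ disjointly supported translates of a near-extremal function, with $B$ the natural inclusion and $A$ built from coordinate functionals of norm at most $\rho_l^{-1}$. The only (immaterial) difference is that you construct those functionals explicitly as the normalized duality maps $|v_j|^{p-2}v_j\in L^{p'}(D)$ and use the concrete $p$-sine extremals $u_l$ with $\rho_l\to\|I\|$, whereas the paper extends the coordinate functionals from $\spn\{u_1,\dots,u_m\}$ by Hahn--Banach and works with an abstract $\varepsilon$-near-maximizer; both yield the same estimate $i_m(I)\geq\|A\|^{-1}\|B\|^{-1}\geq\rho_l$.
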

\begin{proof}
The last two equalities in \eqref{thm:snumbersstrip:eq} were already proved in \hyperref[thm:maxnoncompact]{Theorem~\ref*{thm:maxnoncompact}} and \hyperref[prop:normonstrip]{Proposition~\ref*{prop:normonstrip}}, respectively. As for the other equalities,  it suffices to show that $i_m(I)\geq\|I\|$ for all $m\in \N$ thanks to the fact that the isomorphism numbers are the smallest strict $s$-numbers (see \eqref{prel:isonumssmallest}) and property (S1).

We recall that $i_m(I)=\sup\{\|A\|^{-1}\|B\|^{-1}\}$ where the supremum is taken over all Banach spaces $G$ with $\dim(G)\geq m$ and all bounded linear operators $A\colon L^p(D)\to G$, $B\colon G\to W_0^{1,p}(D)$ such that $AIB$ is the identity on $G$.

Let $\varepsilon>0$ be given. Since smooth compactly supported functions are dense in  $W_0^{1,p}(D)$, there are $l>0$ and a function $u\in\mathcal C^\infty_0(D_l)$ such that
\begin{equation}\label{thm:snumbersstrip:almostmaximizer}
\|u\|_{L^p(D)}>\|I\|-\varepsilon\quad\text{and}\quad\|u\|_{W^{1,p}(D)}=1
\end{equation}
where $D_l=(-l,l)^k\times\prod\limits_{i=1}^{n-k}(a_i,b_i)$. For $i=1,2,\dots,m$, we define
\begin{align*}
D^i_l&=((2i-3)l, (2i-1)l)^k\times\prod\limits_{i=1}^{n-k}(a_i,b_i)\\
\intertext{and}
u_i(x,y)&=u(x_1-2(i-1)l, \dots,x_k-2(i-1)l, y),\ (x,y)\in D^i_l\subseteq\R^k\cart\R^{n-k}.
\end{align*}
Clearly, for every $i=1,2,\dots,m$,
\begin{align}
\text{the rectangles $D^i_l$ are mutually disjoint and $u_i\in\mathcal C^\infty_0(D^i_l)$},\label{thm:snumbersstrip:uidisjsup}\\
\||\nabla u_i|_{\ell^p}\|_{L^p(D)}=\||\nabla u|_{\ell^p}\|_{L^p(D)}\ \text{and}\ \|u_i\|_{L^p(D)}=\|u\|_{L^p(D)}\label{thm:snumbersstrip:uisamenorms}.
\end{align}

Now, we define $B\colon\ell^p(\R^m)\to W_0^{1,p}(D)$ by
\begin{equation*}
B(\{\alpha_i\}_{i=1}^m)=\sum_{i=1}^m\alpha_iu_i.
\end{equation*}
The operator $B$ is a well-defined linear operator and $\|B\|=1$. Indeed, $\sum\limits_{i=1}^m\alpha_iu_i\in W_0^{1,p}(D)$ and
\begin{equation}\label{thm:snumbersstrip:Bnorm}
\|B(\{\alpha_i\}_{i=1}^m)\|_{W^{1,p}(D)}^p=\left\|\sum_{i=1}^m\alpha_i u_i\right\|_{W^{1,p}(D)}^p=\sum_{i=1}^m|\alpha_i|^p\|u_i\|_{W^{1,p}(D)}^p=\sum_{i=1}^m|\alpha_i|^p
\end{equation}
owing to \eqref{thm:snumbersstrip:almostmaximizer}, \eqref{thm:snumbersstrip:uidisjsup} and \eqref{thm:snumbersstrip:uisamenorms}.

Next, note that the functions $u_i$, $i=1,\dots,m$, are linearly independent in $L^p(D)$ because they have mutually disjoint supports. Hence, for each $i\in\{1,\dots,m\}$, the linear functionals $\tilde{\gamma}_i\colon\spn\{u_1,\dots,u_m\}\to\R$ defined as
\begin{equation*}
\tilde{\gamma}_i\left(\sum_{j=1}^m\beta_j u_j\right)=\beta_i
\end{equation*}
are well defined. Moreover,
\begin{equation*}
\left|\tilde{\gamma}_i\left(\sum_{j=1}^m\beta_j u_j\right)\right|=|\beta_i|=\frac{|\beta_i|\|u_i\|_{L^p(D)}}{\|u\|_{L^p(D)}}\leq\frac1{\|u\|_{L^p(D)}}\left\|\sum_{j=1}^m\beta_j u_j\right\|_{L^p(D)}
\end{equation*}
thanks to \eqref{thm:snumbersstrip:uidisjsup} and \eqref{thm:snumbersstrip:uisamenorms}. Therefore, by virtue of the Hanh--Banach theorem, there are functionals $\gamma_i\colon L^p(D)\to\R$, $i=1,\dots,m$, such that $\gamma_i=\tilde{\gamma}_i$ on $\spn\{u_1,\dots,u_m\}$ and
\begin{equation}
\|\gamma_i\|\leq\frac1{\|u\|_{L^p(D)}}.\label{thm:snumbersstrip:coordfuncnorm}
\end{equation}
We define $A\colon L^p(D)\to \ell^p(\R^m)$ as
\begin{equation*}
Av=\left(\gamma_1\left(v\chi_{D_l^1}\right),\dots,\gamma_m\left(v\chi_{D_l^m}\right)\right).
\end{equation*}
The operator $A$ is clearly linear, for the functionals $\gamma_i$ are linear. Furthermore
\begin{equation}\label{thm:snumbersstrip:Anorm}
\|A\|\leq\frac1{\|u\|_{L^p(D)}}.
\end{equation}
Indeed,
\begin{equation*}
\|Av\|_{\ell^p(\R^m)}^p=\sum_{i=1}^m\left|\gamma_i\left(v\chi_{D_l^i}\right)\right|^p\leq\frac1{\|u\|^p_{L^p(D)}}\sum_{i=1}^m\|v\chi_{D_l^i}\|_{L^p(D)}^p\leq\frac1{\|u\|^p_{L^p(D)}}\|v\|_{L^p(D)}^p
\end{equation*}
in view of  \eqref{thm:snumbersstrip:coordfuncnorm} and \eqref{thm:snumbersstrip:uidisjsup}.

Finally, upon observing that $AIB$ is the identity on $\ell^p(\R^m)$ because $u_i\chi_{D_l^i}=u_i$ for every $i\in\{1,\dots,m\}$ thanks to \eqref{thm:snumbersstrip:uidisjsup}, we see that
\begin{equation*}
i_m(I)\geq\|A\|^{-1}\|B\|^{-1}\geq\|u\|_{L^p(D)}>\|I\|-\varepsilon
\end{equation*}
owing to \eqref{thm:snumbersstrip:Bnorm}, \eqref{thm:snumbersstrip:Anorm} and \eqref{thm:snumbersstrip:almostmaximizer}, whence $i_m(I)\geq\|I\|$ since $\varepsilon>0$ may be chosen arbitrarily small.
\end{proof}

\paragraph{Acknowledgments}
The third author would like to express his sincere gratitude to the Fulbright Program for supporting him and giving him the opportunity to visit the second author at the Ohio State University as a Fulbrighter and to conduct research with him.

\bibliography{main}

\begin{thebibliography}{16}
\providecommand{\natexlab}[1]{#1}
\providecommand{\url}[1]{\texttt{#1}}
\expandafter\ifx\csname urlstyle\endcsname\relax
  \providecommand{\doi}[1]{doi: #1}\else
  \providecommand{\doi}{doi: \begingroup \urlstyle{rm}\Url}\fi

\bibitem[Adams and Fournier(2003)]{AF:03}
R.A. Adams and J.J.F. Fournier.
\newblock \emph{{Sobolev spaces}}, volume 140 of \emph{Pure and Applied
  Mathematics (Amsterdam)}.
\newblock Elsevier/Academic Press, Amsterdam, second edition, 2003.
\newblock ISBN 0-12-044143-8.

\bibitem[Belloni and Kawohl(2004)]{BK:04}
M.~Belloni and B.~Kawohl.
\newblock The pseudo-{$p$}-{L}aplace eigenvalue problem and viscosity solutions
  as {$p\to\infty$}.
\newblock \emph{ESAIM Control Optim. Calc. Var.}, 10\penalty0 (1):\penalty0
  28--52, 2004.
\newblock ISSN 1292-8119.
\newblock \doi{10.1051/cocv:2003035}.

\bibitem[Bennett and Sharpley(1988)]{BS:88}
C.~Bennett and R.~Sharpley.
\newblock \emph{Interpolation of operators}, volume 129 of \emph{Pure and
  Applied Mathematics}.
\newblock Academic Press, Inc., Boston, MA, 1988.
\newblock ISBN 0-12-088730-4.

\bibitem[Bouchala(2020)]{Bou}
O.~Bouchala.
\newblock Measures of non-compactness and {S}obolev-{L}orentz spaces.
\newblock \emph{Z. Anal. Anwend.}, 39\penalty0 (1):\penalty0 27--40, 2020.
\newblock ISSN 0232-2064.
\newblock \doi{10.4171/zaa/1649}.

\bibitem[Carl and Stephani(1990)]{CS:90}
B.~Carl and I.~Stephani.
\newblock \emph{Entropy, Compactness and the Approximation of Operators}.
\newblock Cambridge Tracts in Mathematics. Cambridge University Press, 1990.
\newblock ISBN 0521090946.
\newblock \doi{10.1017/CBO9780511897467}.

\bibitem[Edmunds and Lang(2011)]{EL:11}
D.~Edmunds and J.~Lang.
\newblock \emph{Eigenvalues, embeddings and generalised trigonometric
  functions}, volume 2016 of \emph{Lecture Notes in Mathematics}.
\newblock Springer, Heidelberg, 2011.
\newblock ISBN 978-3-642-18267-9.
\newblock \doi{10.1007/978-3-642-18429-1}.

\bibitem[Edmunds and Triebel(1996)]{ET}
D.~E. Edmunds and H.~Triebel.
\newblock \emph{Function spaces, entropy numbers, differential operators},
  volume 120 of \emph{Cambridge Tracts in Mathematics}.
\newblock Cambridge University Press, Cambridge, 1996.
\newblock ISBN 0-521-56036-5.
\newblock \doi{10.1017/CBO9780511662201}.

\bibitem[Edmunds and Evans(2018)]{EE:18}
D.E. Edmunds and W.D. Evans.
\newblock \emph{{Spectral Theory and Differential Operators}}.
\newblock Oxford University Press, Oxford, 2nd edition, 2018.
\newblock ISBN 9780198812050.
\newblock \doi{10.1093/oso/9780198812050.001.0001}.

\bibitem[Hencl(2003)]{Hen}
S.~Hencl.
\newblock Measures of non-compactness of classical embeddings of {S}obolev
  spaces.
\newblock \emph{Math. Nachr.}, 258:\penalty0 28--43, 2003.
\newblock ISSN 0025-584X.
\newblock \doi{10.1002/mana.200310085}.

\bibitem[Karlovich and Spitkovsky(2014)]{KS:14}
A.Y. Karlovich and I.M. Spitkovsky.
\newblock The {C}auchy singular integral operator on weighted variable
  {L}ebesgue spaces.
\newblock In \emph{Concrete operators, spectral theory, operators in harmonic
  analysis and approximation}, volume 236 of \emph{Oper. Theory Adv. Appl.},
  pages 275--291. Birkh\"{a}user/Springer, Basel, 2014.
\newblock \doi{10.1007/978-3-0348-0648-0_17}.

\bibitem[{Lang} et~al.(2020){Lang}, {Musil}, {Ol{\v{s}}{\'a}k}, and
  {Pick}]{LMOP}
J.~{Lang}, V.~{Musil}, M.~{Ol{\v{s}}{\'a}k}, and L.~{Pick}.
\newblock {Maximal non-compactness of Sobolev embeddings}.
\newblock \emph{arXiv e-prints}, art. arXiv:2003.11854, 2020.
\newblock Submitted.

\bibitem[Lindqvist(1995)]{L:95}
P.~Lindqvist.
\newblock Some remarkable sine and cosine functions.
\newblock \emph{Ricerche Mat.}, 44\penalty0 (2):\penalty0 269--290 (1996),
  1995.
\newblock ISSN 0035-5038.

\bibitem[\^{O}tani(1984)]{O:84}
M.~\^{O}tani.
\newblock A remark on certain nonlinear elliptic equations.
\newblock \emph{Proc. Fac. Sci. Tokai Univ.}, 19:\penalty0 23--28, 1984.
\newblock ISSN 0563-6795.

\bibitem[Pick et~al.(2013)Pick, Kufner, John, and Fu\v{c}\'{\i}k]{PKJF:13}
L.~Pick, A.~Kufner, O.~John, and S.~Fu\v{c}\'{\i}k.
\newblock \emph{Function spaces. {V}ol. 1}, volume~14 of \emph{De Gruyter
  Series in Nonlinear Analysis and Applications}.
\newblock Walter de Gruyter \& Co., Berlin, extended edition, 2013.
\newblock ISBN 978-3-11-025041-1; 978-3-11-025042-8.

\bibitem[Pietsch(1974)]{P:74}
A.~Pietsch.
\newblock {$s$}-numbers of operators in {B}anach spaces.
\newblock \emph{Studia Math.}, 51:\penalty0 201--223, 1974.
\newblock ISSN 0039-3223.
\newblock \doi{10.4064/sm-51-3-201-223}.

\bibitem[Pietsch(1978)]{P:78}
A.~Pietsch.
\newblock \emph{Operator ideals}.
\newblock Mathematische Monographien. Deutscher Verlag d. Wiss., VEB, 1978.

\end{thebibliography}

\end{document}